\newcommand{\MAYBE}[1]{\todo[color=orange!60]{#1}}  % notas leves
\newcommand{\SELF}[1]{\todo[color=green!40]{#1}} % note to myself
\newcommand{\CITE}[1]{\todo[color=cyan!30]{#1}}  % citação extra
\newtheorem*{theorem*}{Teorema}
\newtheorem*{corollary*}{Corolário}
\theoremstyle{definition}
\newtheorem*{example}{Examplo} %[section]
\newcommand\inner[1] 		{\langle #1 \rangle}
\newcommand{\R}{\mathds{R}}
\newcommand{\C}{\mathds{C}}
\newcommand{\II}{\mathcal{I}}
\newcommand{\AAA}{\mathcal{A}}
\newcommand{\VV}{\mathcal{V}}
\newcommand{\LL}{\mathcal{L}}
\newcommand{\im}{\mathrm{i}}				% 
\DeclareMathOperator{\Span}{span}
\DeclareMathOperator{\sen}{sen}
\DeclareMathOperator{\Div}{div}
\begin{document}

\title{As Mil Faces de Pitágoras}
%\title{Pythagorean Theorems for Real, Complex and Other Projections}
%\subtitle{Teorema}

\author{Andr\'e L. G. Mandolesi}
%\author{Andr\'e L. G. Mandolesi \\ \emph{Departamento de Matem\'atica, Universidade Federal da Bahia} \\ \emph{Salvador-BA, Brazil} \\ \emph{E-mail:} \texttt{andre.mandolesi@ufba.br}}

%\date{\today.}

\maketitle

%\abstract{bla}

%\section{Introdução}

O Teorema de Pitágoras (TP) é um dos mais antigos, famosos e úteis teoremas da Matemática, e possivelmente o que maior impacto teve na evolução desta e outras ciências.
Neste artigo, vamos olhar para este velho conhecido de diferentes perspectivas, algumas pouco usuais.
Iremos lembrar um pouco da sua história, algumas aplicações e generalizações bem conhecidas, outras nem tanto,
e ver que ele guarda muitas facetas surpreendentes e geralmente ignoradas.

\section{O teorema de mil faces}

Em sua forma moderna, o TP é a relação 
\begin{equation}\label{eq:TP}
	a^2 = b^2 + c^2,
\end{equation}
entre os comprimentos $a$ da hipotenusa, e $b$ e $c$ dos catetos, de um triângulo retângulo (Fig.\,\ref{fig:TP}).

\begin{figure}[htb!]
	\centering
	\includegraphics[width=3cm]{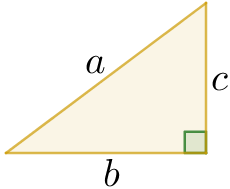}
	\caption{Teorema de Pitágoras, $a^2=b^2+c^2$}
	\label{fig:TP}
\end{figure}

Sua importância se reflete nas centenas de demonstrações surgidas ao longo do tempo \cite{Bogomolny,Loomis1968}.
% Loomis1968
E a forma como ele permeia toda a Matemática, sob diferentes roupagens, se reflete na diversidade de métodos que usaremos: geometria euclidiana, não-euclidiana, analítica, Riemanniana, trigonometria circular e hiperbólica, álgebra linear e exterior, determinantes, cálculo integral e vetorial, combinatória, números complexos, etc.
Isso mostra também a unidade da Matemática, e como é fértil o fluxo de conhecimentos entre suas diferentes áreas.

Inúmeros resultados bem conhecidos são basicamente aplicações ou generalizações do TP.
Sendo impossível listar todos, ficaremos só nos mais óbvios:

\begin{itemize}
	\item O TP equivale à identidade trigonométrica fundamental, $\cos^2 \theta + \sen^2 \theta = 1$.
%	Será útil reescrevê-la como na Fig.\,\ref{fig:identidade}. 
%%	\begin{equation}\label{eq:trig pitag}
%%		\cos^2 \theta_1 + \cos^2 \theta_2 = 1.
%%	\end{equation}
%	
%	
%	\begin{figure}[htb!]
%		\centering
%		\includegraphics[width=5cm]{Figuras/identidade}
%		\caption{Identidade trigonométrica, $\cos^2 \theta_1 + \cos^2 \theta_2 = 1$}
%		\label{fig:identidade}
%	\end{figure}

	\item Decompondo $v\in \R^n$ ou $\C^n$ numa base ortogonal como $v = v_1+\cdots +v_n$, temos
	\begin{equation}\label{eq:norma}
		\|v \|^2 = \|v_1\|^2+\cdots +\|v_n\|^2.
	\end{equation}
	
	\item Em $\R^n$ ou $\C^n$, a distância $d$ entre $A=(a_1,\ldots,a_n)$ e $B=(b_1,\ldots,b_n)$  é
	\begin{equation*}\label{eq:distancia}
		d(A,B)^2=\sum\limits_{k=1}^n |a_k-b_k|^2.
	\end{equation*}
	
	\item Se os lados $b$ e $c$ formarem um ângulo $\theta$, o TP se generaliza como a Lei dos Cossenos,
	\begin{equation}\label{eq:lei cos}
		a^2 = b^2 + c^2 - 2bc\cos \theta.
	\end{equation}
%	Its original formulation by Euclid \cite[p.147]{Eves1990} corresponds to the statement that
%	%\[ BC^2=AB^2+AC^2\pm 2AC\cdot AP, \]
%	\[ a^2=b^2+c^2\pm 2bp \]
%	where $p$ is the orthogonal projection of $c$ onto the line of the side $b$, and the sign is $+$ (resp. $-$) if the angle between $b$ and $c$ is obtuse (resp. acute). FIGURA	

	\item A métrica Riemanniana no plano euclidiano é $ds^2=d x^2+dy^2$, um TP para distâncias infinitesimais.
	Em superfícies curvas (Fig.\,\ref{fig:surface}) ela é uma versão infinitesimal de \eqref{eq:lei cos},
	\[ ds^2 = g_{xx} d x^2 + g_{yy} dy^2 + 2g_{xy} dx dy, \]
	onde $g_{xx}$ e $g_{yy}$ ajustam a escala de comprimento nas direções $x$ e $y$, e $g_{xy}$ envolve o ângulo entre elas.
	Em dimensão $n$, $ds^2=\sum_{i,j=1}^n g_{ij} dx_i dx_j$.
	
	\begin{figure}[htb!]
		\centering
		\includegraphics[width=6cm]{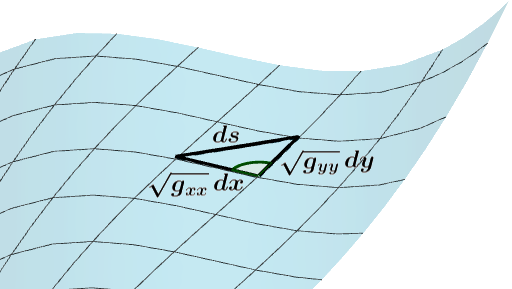}
		\caption{Métrica Riemanniana em uma superfície é uma versão infinitesimal da Lei dos Cossenos}
		\label{fig:surface}
	\end{figure}
	
	\item Num espaço vetorial com produto interno $\inner{\cdot,\cdot}$, 
	$u$ e $v$ são ditos ortogonais se $\inner{u,v}=0$, e nesse caso $\|u+v\|^2=\|u\|^2+\|v\|^2$.
	Com isso pode-se até, por exemplo, interpretar $\int_0^{2\pi} (\sen x + \cos x)^2 \,dx = \int_0^{2\pi} \sen^2 x \,dx + \int_0^{2\pi} \cos^2 x \,dx$ em termos do TP num espaço de funções.
	
	%	\item Norma de vetor $v=(x_1,\ldots,x_n)$ em $\R^n$ ou $\C^n$,
	%	\[ \|v\|^2=|x_1|^2+\cdots+|x_n|^2. \]

	%	\item Dada uma partição $V=V_1\oplus\cdots\oplus V_m$ de um espaço vetorial real ou complexo $V$ com produto interno em subespaços ortogonais $V_1,\ldots,V_m$, 
	%	e projeções ortogonais $P_k:V\rightarrow V_k$,
	%	para $v\in V$ temos
	%	\begin{equation}\label{PT partitions}
		%		\|v\|^2 = \sum\limits_{k=1}^m \left\|P_k v \right\|^2. 
		%	\end{equation}
	
	%	\item Se $\{e_k\}_{k\in K}$ é base ortonormal de um espaço de Hilbert real ou complexo $H$, para $x\in H$ temos 
	%	\[ \|x\|^2 = \sum\limits_{k\in K} \left|\langle e_k,x \rangle \right|^2. \]
\end{itemize}

Veremos depois generalizações menos conhecidas.%

\section{Um pouco de história}

Apesar do nome, o TP era conhecido por várias civilizações muito antes da época de Pitágoras (séc.\ VI a.C.),
sendo improvável que ele ou seus seguidores, os pitagóricos, tenham sido os primeiros a prová-lo \cite{Roque2012}.
E é possível que estivessem menos interessados em \eqref{eq:TP} como resultado geométrico do que como uma relação aritmética de certos trios de números naturais,
as \emph{ternas pitagóricas}, como $(3,4,5)$, $(5,12,13)$, etc.

Estes trios eram tabelados por vários povos antigos,
e possivelmente tinham finalidade didática, servindo como exemplos fáceis de trabalhar.
Também podiam ter utilidade prática: para demarcar um terreno retangular, uma maneira de checar se os lados estão bem retos e perpendiculares é ver se formam com a diagonal uma dessas ternas.
Arqueólogos ainda fazem algo parecido hoje em dia.

Os pitagóricos se interessavam menos por geometria que pelos números,
vistos como algo místico, a base do Universo, cujas propriedades revelariam verdades fundamentais.
Mas, na época, os únicos números eram os naturais, e mesmo frações eram só relações entre eles: se um segmento $a$ media $1$, dizer que outro segmento $b$ media $\frac{2}{3}$ significava apenas que 3 segmentos $b$ mediam o mesmo que 2 $a$'s.
Ou, equivalentemente, que era possível achar uma unidade comum de comprimento que coubesse 3 vezes em $a$, e 2 vezes em $b$.
E, como tudo no mundo devia ser descrito por relações entre números, esperava-se que quaisquer comprimentos pudessem ser comparados assim.

Ironicamente, o TP pode ter levado ao fim da filosofia pitagórica, e à primeira crise da história da matemática.
Usando \eqref{eq:TP} para achar a razão $\frac mn$ entre a diagonal de um quadrado e seu lado, obtém-se $\frac{m^2}{n^2} = 2$.
Mas fatorando $m$ e $n$ em primos, ao elevá-los ao quadrado a quantidade de cada fator dobra, e simplificando não há como sobrar um único $2$.
Ou seja, é impossível comparar tais comprimentos por meio de uma razão entre números (naturais).
Eles são \emph{incomensuráveis}, não podem ser ambos múltiplos inteiros de uma unidade de medida comum.

Diz-se que encontrar algo que não podia ser descrito por números foi um choque para os pitagóricos, que tentaram esconder o resultado.
%, diz-se que recorrendo até a assassinatos.
Mas o se\-gre\-do se espalhou, destruindo a ideia de um mundo regido por números.
Estes deixaram o palco principal da matemática grega, que se voltou para a geometria, vista a partir daí como a base de verdades mais fundamentais, que os números não alcançam.
%A aritmética passa a ser vista como uma matemática inferior, de comerciantes.

Alguns historiadores discordam dessa narrativa,
pois como os pitagóricos se interessavam pouco por geometria, não teriam dado tanta importância aos incomensuráveis \cite{Roque2012}.
	\CITE{p.125}
Mas é inegável o impacto que a descoberta destes teve na matemática  grega.
A percepção de que a geometria guardava mistérios contraintuitivos levou a uma maior preocupação com demonstrações formais, como na obra de Euclides.
E ela passou a ser feita quase sem números: achar uma área não era atribuir-lhe um valor, mas construir, com régua não numerada e compasso, um quadrado de mesma área, que pudesse ser comparado com outros.
Daí vem o problema da quadratura do círculo, que perseguiu os matemáticos por milênios.

\section{Áreas de figuras semelhantes}

Na geometria grega, o TP era mais comumente visto como uma relação 
\begin{equation}\label{eq:soma areas}
	\AAA = \AAA_1 + \AAA_2,
\end{equation}
entre a área $\AAA$ de um quadrado construído sobre a hipotenusa do triângulo retângulo, e áreas $\AAA_1$ e $\AAA_2$ de quadrados construídos sobre os catetos (Fig.\,\ref{fig:quadriculado}).
Nessa perspectiva, o triângulo vira um ``somador'' de áreas, juntando $\AAA_1$ e $\AAA_2$ num único quadrado  $\AAA$ com a área total.
E \eqref{eq:soma areas} só se converte em \eqref{eq:TP} após aplicarmos a fórmula da área do quadrado.

\begin{figure}[htb!]
	\centering
	\includegraphics[width=4cm]{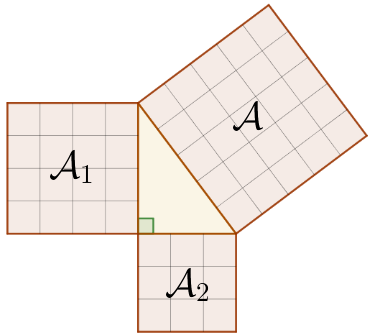}
	\caption{Teorema de Pitágoras, $\AAA = \AAA_1 + \AAA_2$.}
	\label{fig:quadriculado}
\end{figure}

Diz a lenda que Pitágoras teria descoberto \eqref{eq:soma areas} observando uma parede ladrilhada (Fig.\,\ref{fig:ladrilho}).
%, e depois generalizado para outros triângulos retângulos.
Uma prova bem conhecida e autoexplicativa é a da Fig.\,\ref{fig:cortes}.

\begin{figure}[htb!]
	\centering
	\includegraphics[width=4cm]{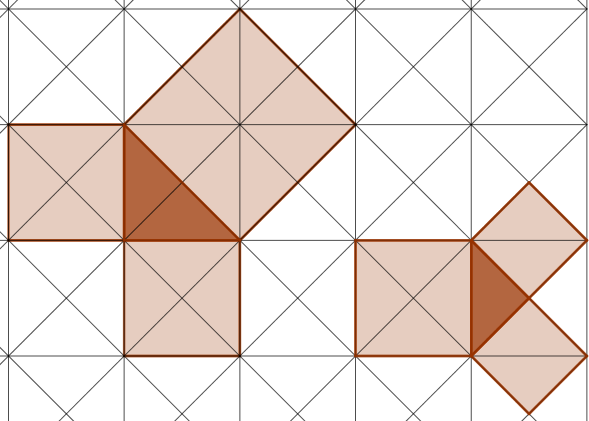}
	\caption{Teorema de Pitágoras em ladrilhos triangulares}
	\label{fig:ladrilho}
\end{figure}

\begin{figure}[htb!]
	\centering
	\includegraphics[width=6cm]{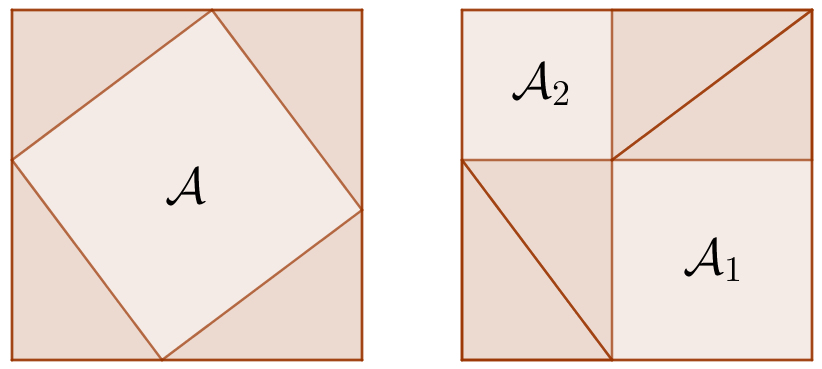}
	\caption{Duas maneiras de remover 4 triângulos retângulos iguais de um mesmo quadrado, sobrando áreas iguais}
	\label{fig:cortes}
\end{figure}

As figuras construídas sobre os lados nem precisam ser quadrados, basta serem semelhantes entre si, pois nesse caso as razões entre suas áreas e as dos quadrados serão iguais (Fig.\, \ref{fig:areageral}).
Isso também significa que o TP pode ser provado usando qualquer trio de figuras semelhantes construídas sobre os lados.

\begin{figure}[htb!]
	\centering
	\includegraphics[width=4cm]{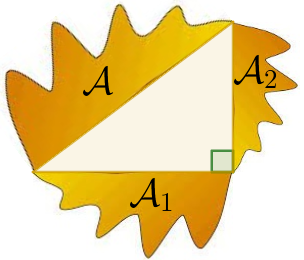}
	\caption{Áreas de figuras semelhantes, $\AAA = \AAA_1 + \AAA_2$.}
	\label{fig:areageral}
\end{figure}

Em geral, construímos as figuras para fora do triângulo,
mas as áreas não mudam se estiverem pro outro lado, certo? 
Isso levou Albert Einstein, aos 12 anos, a uma prova interessante (Fig.\,\ref{fig:Einstein}):
a altura em relação à hipotenusa divide o triângulo em dois semelhantes a ele,
e tomando os três como sendo as figuras construídas sobre os lados, é imediato que $\AAA = \AAA_1 + \AAA_2$.

\begin{figure}[htb!]
	\centering
	\includegraphics[width=3cm]{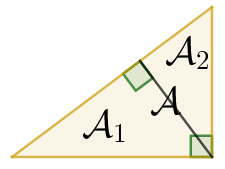}
	\caption{Demonstração de Einstein, $\AAA = \AAA_1 + \AAA_2$}
	\label{fig:Einstein}
\end{figure}

E, claro, as figuras nem precisam ser construídas nos lados do triângulo.
Basta serem figuras semelhantes, com escalas de comprimento proporcionais aos comprimentos de tais lados (Fig.\,\ref{fig:3 discos}).

\begin{figure}[htb!]
	\centering
	\includegraphics[width=7cm]{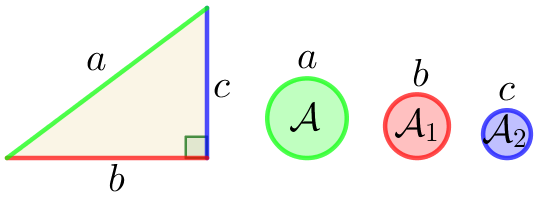}
	\caption{Discos obtidos `enrolando' os lados do triângulo retângulo. Novamente, $\AAA = \AAA_1 + \AAA_2$.}
	\label{fig:3 discos}
\end{figure}

\section{TP e o Axioma das Paralelas}

É comum provar \eqref{eq:TP} usando propriedades de semelhança e proporcionalidade, que dependem do famoso Axioma das Paralelas (AP).
Já \eqref{eq:soma areas} parece resultar, em provas como a da Fig.\,\ref{fig:cortes}, de um jogo de cortar, mover e reagrupar figuras, via congruência.
Isso pode dar a impressão de que o AP só entre na passagem de \eqref{eq:soma areas} para \eqref{eq:TP}, que usa a fórmula $\AAA=l^2$ da área do quadrado de lado $l$, obtida dividindo-o em outros menores por meio de paralelas.

Na verdade, o AP já está presente em \eqref{eq:soma areas} de várias formas. Para que as áreas $\AAA$, $\AAA_1$ e $\AAA_2$ na Fig.\,\ref{fig:cortes} tenham ângulos retos é preciso que a soma dos ângulos dos triângulos seja $180^\circ$, o que requer o AP.
E a própria existência de quadrados depende dele, bem como o conceito de figuras semelhantes: sem ele não há como dilatar uma figura de forma homogênea, preservando seus ângulos, dilatando seus comprimentos por um fator $k$, suas áreas por $k^2$, etc.

Pode-se provar que o TP, em ambas as formas, é equivalente ao AP.
E a melhor forma de entender como ele é um resultado puramente euclidiano é ver o que ocorre em outras geometrias.

\section{Geometrias não-euclidianas}

As geometrias não-euclidianas \cite{Wolfe2012} (esférica/elíptica 
e hiperbólica) resultam da negação do AP
(e alguns ajustes em outros axiomas).
Vamos começar pela esférica, que é mais intuitiva.

O que faz o papel de `retas' na superfície de uma esfera são os círculos máximos (com mesmo centro e raio da esfera), pois o menor caminho entre 2 pontos ao longo da superfície é um arco deles.
Como tais círculos sempre se interceptam, nessa geometria não há paralelas.
Além disso, polígonos ficam mais ``arredondados'' à medida que seu tamanho aumenta,
e a soma de seus ângulos depende da área.
Para triângulos, a soma é maior que $180^\circ$.
Não existem quadrados (quadriláteros com lados iguais e ângulos retos),
e sim quadriláteros equiláteros e com ângulos iguais mas maiores que $90^\circ$ (Fig.\,\ref{fig:quadrado}). 
Sua área não é $\AAA=l^2$, pois se tentarmos dividi-los em outros menores, estes não se encaixarão direito.

\begin{figure}[htb!]
	\centering
	\includegraphics[width=4.5cm]{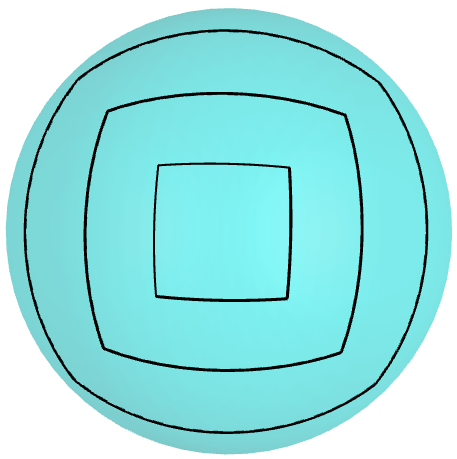}
	\caption{Quadriláteros esféricos equiláteros e equiangulares. 
		Não são semelhantes pois a área afeta os ângulos.}
	\label{fig:quadrado}
\end{figure}

%O próprio conceito de figuras semelhantes é problemático nessa geometria, pois os ângulos mudam com o tamanho.

Vamos ver como \eqref{eq:TP} falha na superfície da Terra.
Na Fig.\,\ref{fig:mapa}, temos o triângulo formado por Quito-Equador, Macapá-AP e Porto Alegre-RS.
Duas cidades estão quase na linha do Equador, e duas quase no mesmo meridiano, logo o ângulo em Macapá é quase reto.
Por \eqref{eq:TP}, a distância de Quito a Porto Alegre deveria ser de $4511km$, um erro de $123km$ em relação à distância no mapa.
Isso ocorre porque o triângulo é levemente arredondado, devido à curvatura da Terra, logo o TP não se aplica perfeitamente a ele.

%Vamos fazer um experimento.
%No Google Maps (\url{www.google.com/maps}), a distância entre 2 pontos é medida clicando com o botão direito em um, selecionando ``Medir distância'', e clicando no outro.
%Veja que de Quito, no Equador, até Macapá-AP são $3037km$, e daí até Porto Alegre-RS são $3336km$.
%Com a mesma ferramenta, clicando nas 3 cidades, desenhe o triângulo que elas formam, e observe que o ângulo em Macapá é quase reto (clique com o botão direito em cada cidade para ver sua latitude e longitude).

\begin{figure}[htb!]
	\centering
	\includegraphics[width=5.5cm]{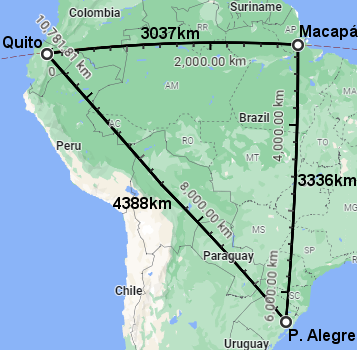}
	\caption{Triângulo retângulo na superfície da Terra. Fonte: Google Maps.}
	\label{fig:mapa}
\end{figure}

O erro aumenta para triângulos maiores.
%No canto esquerdo do Google Maps, clique em Camadas, Mais, e Visualização do globo.
%Meça a distância de Quito a um ponto no Gabão, na costa da África, e arraste esse ponto até estar sobre a linha do Equador, a cerca de $10000km$ (o raio da Terra é $r\cong 6371km$, logo $r\cdot \frac\pi2 \cong 10000$ é um quarto de volta na Terra). 
%Depois ache o Pólo Norte (no mar um pouco acima da Groenlândia, em torno do qual o globo gira), clique por ali (talvez não apareça o ponto), depois novamente em Quito.
Na Fig.\,\ref{fig:triretangulo} temos um triângulo esférico delimitando um octante da superfície da Terra.
Ele é equilátero, seus 3 ângulos são retos, e, claro, não há a menor chance do TP funcionar com ele!

\begin{figure}[htb!]
	\centering
	\includegraphics[width=7cm]{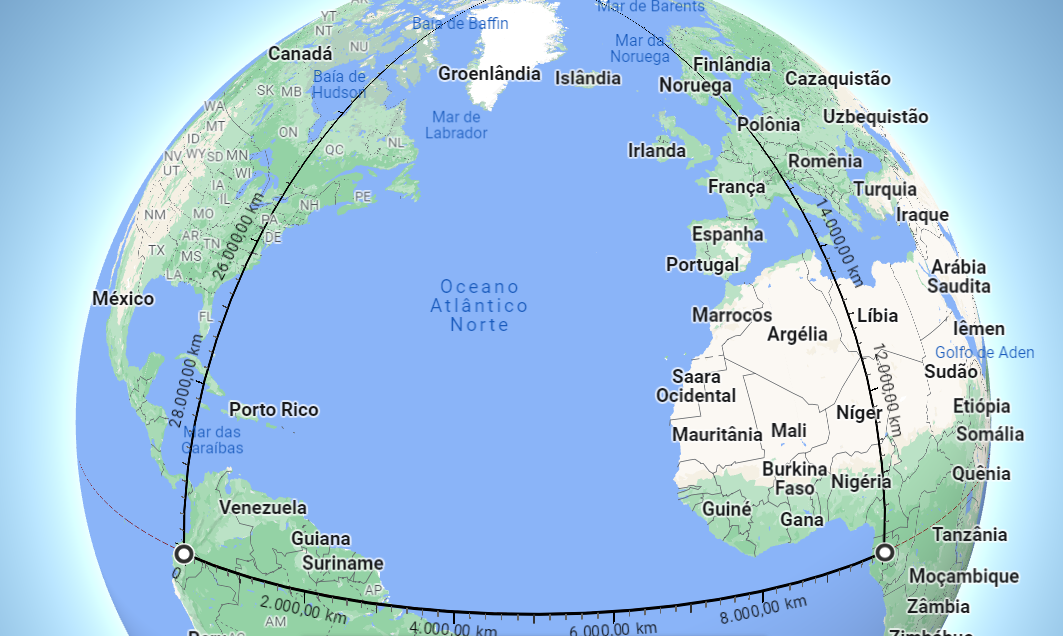}
	\caption{Triângulo esférico equilátero triretângulo. Fonte: Google Maps.}
	\label{fig:triretangulo}
\end{figure}

Alguém poderia dizer que esses não são realmente triângulos, e as distâncias deveriam ser medidas em linha reta, atravessando a Terra.
Mas para o planejamento de rotas, por exemplo, o que importam são distâncias ao longo da superfície.
Para isso a geometria esférica tem seu próprio TP, relacionando os lados de um triângulo retângulo esférico.
%Como a esfera está no espaço euclidiano 3D, isso é fácil. 

\begin{theorem*}[Pitágoras Esférico]
	Sejam $a, b, c$ os comprimentos da hipotenusa e catetos de um triângulo retângulo esférico, 
	numa esfera de raio $R$. 
		\CITE{Wolfe2012 p.198}
	Então
		\SELF{\url{https://math.stackexchange.com/questions/1374058/why-does-the-pythagorean-theorem-have-its-simple-form-only-in-euclidean-geometry} dá versão 3D sem prova: na notação de \eqref{eq:DeGua},
		$\cos \frac{|\AAA|}{2} = \cos \frac{|\AAA_1|}{2} \cos \frac{|\AAA_2|}{2} \cos \frac{|\AAA_3|}{2} + \sin \frac{|\AAA_1|}{2} \sin \frac{|\AAA_2|}{2} \sin \frac{|\AAA_3|}{2}$, e diz que hiperbólico é com $-$ (mas deve ser $\cosh$ e $\sinh$?). Também generaliza lei dos cos.}
	\begin{equation}\label{eq:TP esferico}
		\cos \tfrac aR = \cos \tfrac bR \cdot \cos \tfrac cR.
	\end{equation}
\end{theorem*}
\begin{proof}
	Com a origem $O$ no centro da esfera, e os eixos como na Fig.\,\ref{fig:esferico}, 
	de modo que $A=(R,0,0)$, $B=(R \cos \theta, R \sen \theta,0)$ e $C=(R\cos\beta,0,R\sen\beta)$,
	temos $\alpha = \frac a R$, $\beta = \frac b R$, $\theta = \frac c R$,
	e o produto escalar entre $\overrightarrow{OB}$ e $\overrightarrow{OC}$ leva a
	$\cos \alpha = \cos \beta \cdot \cos \theta$ (exercício).
\end{proof}

\begin{figure}[htb!]
	\centering
	\includegraphics[width=5cm]{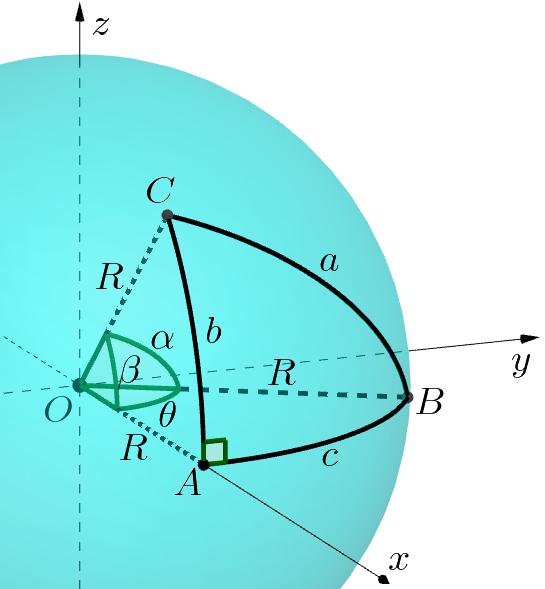}
	\caption{Teor.\ de Pitágoras esférico, $\cos \tfrac aR = \cos \tfrac bR \cdot \cos \tfrac cR$}
	\label{fig:esferico}
\end{figure}

Como o raio da Terra é $R \cong 6371km$,
por \eqref{eq:TP esferico} 
a distância de Porto Alegre a Quito seria de $4414km$, bem mais próxima da real (mas não exata pois o ângulo em Macapá não é precisamente reto, e a Terra não é uma esfera perfeita).

Na Fig.\,\ref{fig:triretangulo} ocorre algo interessante: os 3 lados são hipotenusas! Mas \eqref{eq:TP esferico} funciona, com os três cossenos dando $0$.
Outro fenômeno inusitado é que se $b=\frac\pi2 R$ então $a=\frac\pi2 R$ ou $\frac{3\pi}{2} R$, independente do valor de $c$. Tente entender o significado geométrico disso.

%Esse teorema parece muito diferente do TP, mas 
Como o TP sempre serviu para medir pequenas distâncias na superfície da Terra, espera-se que \eqref{eq:TP} e \eqref{eq:TP esferico} deem resultados parecidos para triângulos não muito grandes.
De fato, como $\cos x \cong 1 - \frac{x^2}{2}$ para $x$ pequeno, temos $a^2 \cong b^2 + c^2$ se $a,b,c \ll R$ (exercício).

%Então o TP não é exato, mas apenas uma aproximação do TPE?
%Calma, depende da geometria: na euclidiana o TP é exato, na esférica o TPE.
%Mas como a geometria real é euclidiana, o TP é o verdadeiro teorema?
%Calma outra vez, o que significa ser real?
%Matematicamente, as 2 geometrias são válidas.
%Ah, mas o universo físico, se não nos limitarmos à superfície da terra, é euclidiano, não é?
%Não!
%A Teoria da Relatividade Geral mostra que a gravidade curva o espaço, que só é aproximadamente euclidiano em pequenas distâncias. 
%Sistemas de GPS não seriam precisos se usassem geometria euclidiana.
%
%Isso levanta interessantes questões sobre como nossa matemática é moldada por nossa experiência física. 
%Se vivêssemos num planeta minúsculo, a primeira geometria desenvolvida para medir terrenos poderia ter sido a esférica, e a euclidiana nos pareceria um tanto exótica. 
%Se pudéssemos viver perto de um buraco negro, as distorções espaciais poderiam ser tão fortes que desenvolveríamos uma geometria completamente diferente.

\begin{figure}[htb!]
	\centering
	\includegraphics[width=5cm]{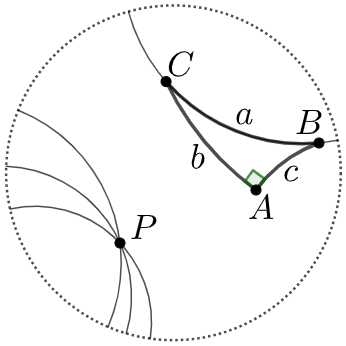}
	\caption{Modelo do disco de Poincaré da geometria hiperbólica. As ``retas'' são arcos de círculo perpendiculares à borda, ou diâmetros do disco. Por $P$ passam infinitas paralelas à reta $BC$. O triângulo $ABC$ é retângulo, com $a\cong 3.34$ e $b=c=2$ (a escala de comprimento aumenta perto da borda). A curvatura é $K=-1$.}
	\label{fig:hyperbolic}
\end{figure}

A geometria hiperbólica é, de certa forma, antípoda da esférica. Nela, por um ponto passam infinitas paralelas a uma reta (Fig.\,\ref{fig:hyperbolic}). 
Triângulos ficam ``magros'', com sua área determinando a soma dos ângulos, que é menor que $180^\circ$.
Sua trigonometria usa funções hiperbólicas, e temos a seguinte versão do TP:

\begin{theorem*}[Pitágoras Hiperbólico]
	Sejam $a, b, c$ os comprimentos da hipotenusa e catetos de um triângulo retângulo hiperbólico, num plano hiperbólico de curvatura $K=-\frac{1}{R^2}$. Então
	\begin{equation}\label{eq:TP hiperbolico}
		\cosh \tfrac aR = \cosh \tfrac bR \cdot \cosh \tfrac cR.
	\end{equation}
\end{theorem*}

A prova pode ser vista em \cite{Wolfe2012}.
	\CITE{p. 144 \\ Agustini2022 p.128}
Teste com os valores da Fig.\,\ref{fig:hyperbolic} ($R$ não é o raio do disco, e sim um pseudo-raio definido a partir de $K$).
Como $\cosh x \cong 1 + \frac{x^2}{2}$ para $x$ pequeno, de novo $a^2 \cong b^2 + c^2$ se $a,b,c \ll R$.

\section{Teorema de Pitágoras Unificado}

Essas aproximações para pequenos triângulos não revelam a conexão mais ampla que há entre os TP's das três geometrias, e que surge ao generalizarmos a forma \eqref{eq:soma areas}.
Mas como, se nas não-euclidianas não há quadrados nem figuras semelhantes em geral?

Semelhança euclidiana requer figuras com comprimentos proporcionais e ângulos iguais,
o que é difícil nas geometrias não-euclidianas, nas quais a área afeta os ângulos.
Mas círculos não têm ângulos, logo esse problema não os atinge, e intuitivamente eles parecem semelhantes.
É uma semelhança mais fraca, pois seus comprimentos não são proporcionais ao raio, nem suas áreas ao quadrado dele (imagine o que ocorre quando o raio do círculo ultrapassa $\frac14$ de uma volta na esfera).
Surpreendentemente, já basta. 

As três geometrias são ligadas pelo conceito de curvatura: na esférica, $K = \frac{1}{R^2}>0$, na euclidiana $K=0$, e na hiperbólica $K = -\frac{1}{R^2}<0$
(a euclidiana pode ser vista como um limite das outras quando $R \rightarrow \infty$).
E uma forma do TP válida em todas é:
\MAYBE{Em \cite{Foote2017} há uma Lei dos Cossenos Unificada. Como relacionar com caso próprio?}

\begin{theorem*}[Pitágoras Unificado \cite{Foote2017}]
	As áreas $\AAA$, $\AAA_1$ e $\AAA_2$ de discos com raios dados pela hipotenusa e catetos de um triângulo retângulo (Fig.\,\ref{fig:proprio 3}) satisfazem
	\begin{equation}\label{eq:TP unificado}
		\AAA = \AAA_1 + \AAA_2 - \frac{K}{2\pi} \AAA_1 \AAA_2.
	\end{equation}
\end{theorem*}
\begin{proof}
	Na geometria esférica, a área do disco de raio $r$ 
	é $\AAA(r) = 2\pi R^2 (1-\cos \frac rR)$, e na hiperbólica é $\AAA(r) = 2\pi R^2 (\cosh \frac rR -1)$.
		\CITE{Wolfe2012 p.169 (hyperbolic)}
	Agora é só usar \eqref{eq:TP esferico} e \eqref{eq:TP hiperbolico}.
\end{proof}

\begin{figure}[htb!]
	\centering
	\includegraphics[width=5cm]{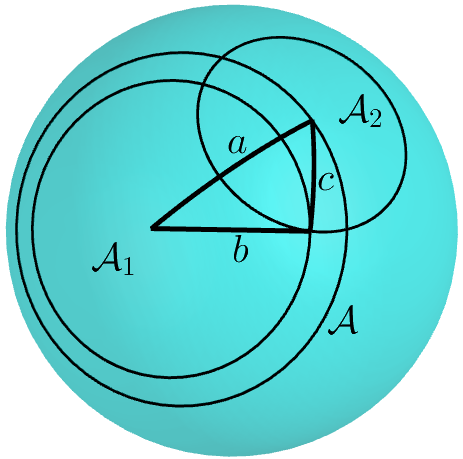}
	\caption{Discos cujos raios são os lados de um triângulo. Se este for retângulo, temos \eqref{eq:TP unificado}. Se for próprio, temos \eqref{eq:soma areas}.}
	\label{fig:proprio 3}
\end{figure}

\section{Triângulos próprios}

O termo extra em \eqref{eq:TP unificado} lembra um pouco o de \eqref{eq:lei cos}, e sugere que triângulos retângulos deixam de ser os que dão a fórmula mais simples.
Na verdade, nas geometrias não-euclidianas eles perdem várias propriedades que os tornam especiais, como a hipotenusa ser o diâmetro do círculo circunscrito.
Mas estas são herdadas por outro tipo de triângulo \cite{Maraner2010}.

%De fato, mudando o tipo de triângulo, é possível obter \eqref{eq:soma areas} nas três geometrias.

Um triângulo é \emph{próprio} se um ângulo for a soma dos demais (Fig.\,\ref{fig:proprio}). O lado oposto a ele também é chamado de \emph{hipotenusa}, e os outros de \emph{catetos}.
Na geometria euclidiana, a soma dos ângulos é $180^\circ$,
logo triângulos retângulos e próprios são a mesma coisa.
%Nas outras eles diferem, e os próprios preservam várias propriedades que os triângulos retângulos perdem \cite{Maraner2010}, como as mostradas na Fig.\,\ref{fig:proprio}.

\begin{figure}[htb!]
	\centering
	\includegraphics[width=5cm]{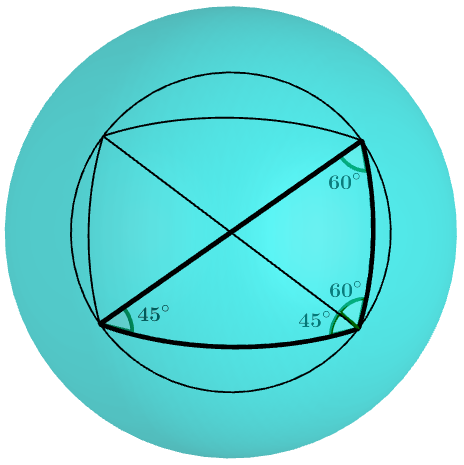}
	\caption{Triângulo próprio, hipotenusa é diâmetro do círculo circunscrito, e diagonal do quadrilátero equiangular}
	\label{fig:proprio}
\end{figure}

Para discos construídos com os lados desse triângulo, \eqref{eq:soma areas} vale nas três geometrias:

\begin{theorem*}[Pitágoras Próprio \cite{Maraner2010}]
	As áreas $\AAA$, $\AAA_1$ e $\AAA_2$ de discos com raios dados pela hipotenusa e catetos de um triângulo próprio satisfazem $\AAA = \AAA_1 + \AAA_2$.
\end{theorem*}

A prova geral é dada em \cite{Maraner2010}.
Em \cite{Pambuccian2010}, são citados artigos anteriores que tratam do caso hiperbólico.
	\CITE{Pambuccian2010 diz que Familiari-Calapso1969 prova caso hiperbólico, e Vranceanu1971 prova $\AAA(a) = \AAA(b) + \frac{\sin (A-B)}{C} \AAA(c)$ para todo trian hiperb. \\
	Maraner2010 diz que como vale nas 3 geometrias, deve dar pra provar com axiomas da Geometria Neutra. Pambuccian2010 diz que precisa também de números reais, e traz refs de Familiari-Calapso que mencionam Geometria Absoluta, mas não achei}

Esse resultado tem uma interpretação interessante: girando o triângulo próprio da Fig.\,\ref{fig:proprio 3} de modo que o segmento $a$ ``varra'' a área $\AAA$, e $b$ varra $\AAA_1$, $c$ irá varrer a área $\AAA-\AAA_1$ do anel entre elas, que é igual à área $\AAA_2$ que $c$ varreria se girasse em torno de sua extremidade. 

O caso euclidiano é fácil de entender \cite{Foote2017}.
Na Fig.\,\ref{fig:dinamico}, ao girar o triângulo retângulo por um ângulo $d\theta$, o segmento $c$ varre uma área $d\AAA$.
Nesse movimento, ele gira o mesmo $d\theta$, enquanto desliza ao longo do seu comprimento. Desconsiderando tal deslizamento, que varre uma área nula, $d\AAA$ será igual à área pintada do disco à direita, obtida só girando $c$.
Integrando, a área do anel será igual à desse disco.

\begin{figure}[htb!]
	\centering
	\includegraphics[width=8.5cm]{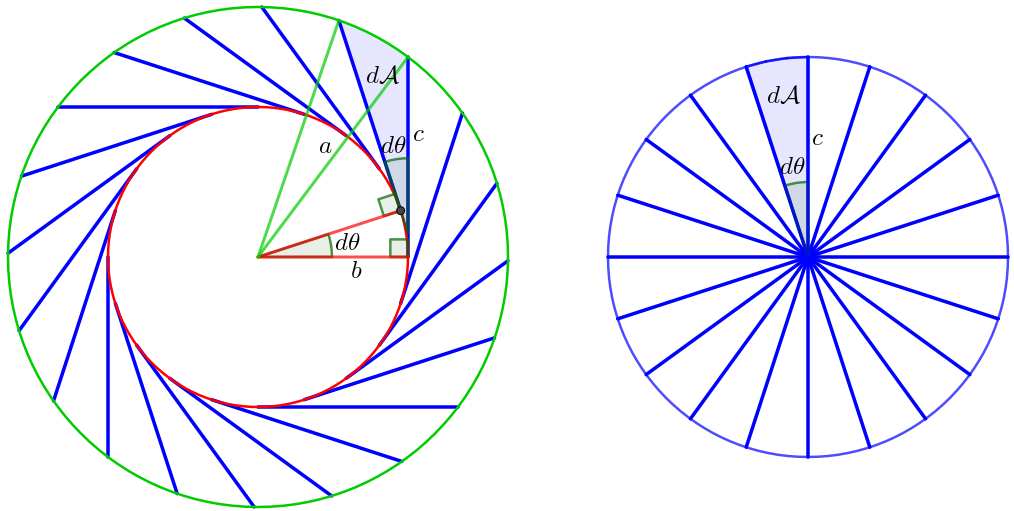}
	\caption{As regiões pintadas têm a mesma área $d\AAA$,
%		 = \frac12 c^2 d\theta$, 
		logo a área total do anel é igual à do disco à direita.}
	\label{fig:dinamico}
\end{figure}

Na Fig.\,\ref{fig:proprio 3}, se o triângulo for retângulo e girar $d\theta$, o segmento $c$ irá girar o mesmo $d\theta$ no espaço, mas sua rotação na superfície da esfera, que varre áreas, será menor (imagine carregar uma vara pela linha do Equador: ela está girando no espaço, mas vista da superfície está sempre apontada para a frente). 
Isso faz a área $\AAA-\AAA_1$ do anel ser menor que $\AAA_2$, o que explica a subtração em \eqref{eq:TP unificado}.
Na geometria hiperbólica, a curvatura $K<0$ faz $c$ girar mais que $d\theta$, de modo que $\AAA-\AAA_1>\AAA_2$.
Nessas duas geometrias, a mudança para triângulo próprio ajusta o ângulo entre $c$ e a direção de rotação para criar um deslizamento transversal ao comprimento, que varre a área necessária para compensar o efeito da curvatura e obter $\AAA-\AAA_1 = \AAA_2$.

Substituindo as fórmulas das áreas, o teorema dá relações entre os lados do triângulo próprio:

\begin{corollary*}
	A hipotenusa $a$ e os catetos $b$ e $c$ de um triângulo próprio esférico ou hiperbólico estão relacionados, respectivamente, por
	\begin{align*}
		1+\cos \tfrac aR &= \cos \tfrac bR + \cos \tfrac cR, \text{ ou} \\
		1+\cosh \tfrac aR &= \cosh \tfrac bR + \cosh \tfrac cR.
	\end{align*}
\end{corollary*}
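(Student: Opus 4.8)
The plan is to feed the two explicit disc-area formulas recalled in the proof of the Unified Theorem into the Pitágoras Próprio identity $\AAA = \AAA_1 + \AAA_2$, which the previous theorem guarantees holds in all three geometries. Since each area is an affine function of $\cos\tfrac rR$ (spherical case) or $\cosh\tfrac rR$ (hyperbolic case) of the side length $r$, the area identity collapses, after dividing out the common factor $2\pi R^2$, into a linear relation among the three cosines (or hyperbolic cosines), which is exactly the claimed formula.

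For the spherical case, I would substitute $\AAA(r) = 2\pi R^2\bigl(1 - \cos\tfrac rR\bigr)$, with $r$ equal to $a$, $b$, $c$ in turn, into $\AAA = \AAA_1 + \AAA_2$ and cancel the common factor $2\pi R^2$, obtaining
\[ 1 - \cos\tfrac aR = \bigl(1 - \cos\tfrac bR\bigr) + \bigl(1 - \cos\tfrac cR\bigr). \]
The right-hand side equals $2 - \cos\tfrac bR - \cos\tfrac cR$, so a one-line rearrangement isolates $\cos\tfrac bR + \cos\tfrac cR = 1 + \cos\tfrac aR$, which is the claimed identity.

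For the hyperbolic case the argument is the mirror image, using $\AAA(r) = 2\pi R^2\bigl(\cosh\tfrac rR - 1\bigr)$; the same cancellation and rearrangement produce $1 + \cosh\tfrac aR = \cosh\tfrac bR + \cosh\tfrac cR$. There is really no obstacle in the corollary itself — it is a one-line substitution — so the only thing to watch is the bookkeeping of the additive constant $\mp 1$ carried by each area formula, which is precisely what supplies the leading $1$ on the left-hand side of each identity. The genuine content lies entirely in the Pitágoras Próprio theorem being invoked, whose general proof is deferred to the cited references.
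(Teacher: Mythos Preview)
Your proof is correct and matches the paper's own approach exactly: the paper simply says ``substituindo as fórmulas das áreas'' into the Pitágoras Próprio identity $\AAA = \AAA_1 + \AAA_2$, which is precisely the substitution and cancellation of $2\pi R^2$ you carry out. There is nothing to add.
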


Teste na Fig.\,\ref{fig:proprio}, construída no GeoGebra com raio da esfera $R=1$, e o triângulo tendo $a=\frac\pi2$, $b\cong 1.22$ e $c\cong 0.86$.
Quando $a,b,c \ll R$, novamente $a^2 \cong b^2 + c^2$.

\section{Tetraedros e simplexos}

Vamos voltar para a geometria euclidiana, e generalizar o TP para dimensões maiores.

Um tetraedro é \emph{trirretângulo} se as três arestas em um vértice são perpendiculares (Fig.\,\ref{fig:tetraedro}). A \emph{face hipotenusal} é a oposta a esse vértice.

\begin{figure}[htb!]
	\centering
	\includegraphics[width=5cm]{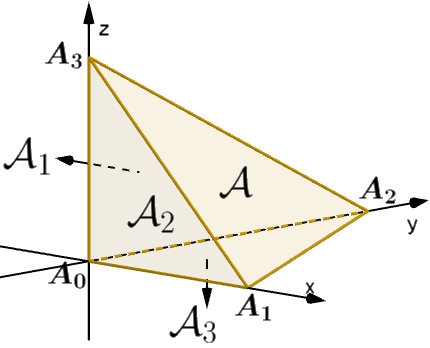}
	\caption{Tetraedro trirretângulo, $\AAA^2 = \AAA_1^2 + \AAA_2^2 + \AAA_3^2$}
%	 e $\cos^2 \theta_1 + \cos^2 \theta_2 + \cos^2 \theta_3 = 1$.}
	\label{fig:tetraedro}
\end{figure}

\begin{theorem*}[De Gua (1783)]
		\CITE{DE GUA, Jean-Paul. Propositions neuves, et non moins utiles que curieuses, sur le Tétraèdre; ou Essai de Tétraédrométrie In: HISTOIRE de l’Académie Royale des Sciences ... avec lesmémoires de mathématique \& de physique. Paris: Académie Royale des Sciences, 1783. p.363-402. Disponível em: \url{https://gallica.bnf.fr/ark:/12148/bpt6k3582m/f496}}
	Em um tetraedro trirretângulo, a área $\AAA$ da face hipotenusal está relacionada às áreas $\AAA_1$, $\AAA_2$, $\AAA_3$ das outras faces por
	\begin{equation}\label{eq:DeGua}
		\AAA^2 = \AAA_1^2 + \AAA_2^2 + \AAA_3^2.
	\end{equation}
\end{theorem*}
\begin{proof}
	Na Fig.\,\ref{fig:tetraedro}, 
	$\AAA = \frac12 \|\overrightarrow{A_1A_2} \times \overrightarrow{A_1A_3}\|$ e analogamente para as outras áreas.
	Além disso, $\overrightarrow{A_1A_2} = \overrightarrow{A_0A_2} - \overrightarrow{A_0A_1}$
	e
	$\overrightarrow{A_1A_3} = \overrightarrow{A_0A_3} - \overrightarrow{A_0A_1}$.
	O resto fica como exercício (observando que 
	$\overrightarrow{A_0A_1} \times \overrightarrow{A_0A_2}$,
	$\overrightarrow{A_0A_1} \times \overrightarrow{A_0A_3}$ e
	$\overrightarrow{A_0A_2} \times \overrightarrow{A_0A_3}$
	são perpendiculares).
\end{proof}

%Sendo $\theta_k$ o ângulo que  a face hipotenusal forma com uma das outras, temos $\cos \theta_k = \frac{\AAA_k}{\AAA}$.
%% (que mede o quanto $\AAA$ contrai se projetada ortogonalmente em um dos planos coordenados).
%Assim, \eqref{eq:DeGua} permite generalizar a relação da Fig.\,\ref{fig:identidade} como
%\begin{equation*}\label{eq:3 pis}
%	\cos^2 \theta_1 + \cos^2 \theta_2 + \cos^2 \theta_3 = 1.
%\end{equation*}

A versão em $\R^n$ desse tetraedro é um \emph{$n$-simplexo retangular}.
Em um sistema de coordenadas cartesianas ortogonais $(x_1,\ldots,x_n)$, ele é um poliedro $n$-dimensional com um vértice $A_0$ na origem, e um $A_k$ em cada eixo, na coordenada $x_k = a_k>0$.
Sua  \emph{face} $F_k$ é o poliedro $(n-1)$-dimensional formado por todos os vértices exceto $A_k$.
A \emph{face hipotenusal} é $F_0$.

O seguinte resultado é frequentemente redescoberto, por diferentes meios
(\cite{Cho1991,Donchian1935,Eifler2008,Yeng1990}, entre outros).
Um $m$-volume é um volume $m$-dimensional (comprimentos são 1-volumes, áreas são 2-volumes, etc.).

\begin{theorem*}
	Em um $n$-simplexo retangular $S$, seja $\VV_k$ o $(n-1)$-volume da face $F_k$. Então 
		\SELF{Cho1991 relaciona $m$-volumes de sub-simplexos de $S$ (por ex., na Fig.\,\ref{fig:tetraedro}, a soma dos quadrados das arestas da face $\AAA$ é o dobro da soma dos quadrados das outras arestas).}
	\begin{equation}\label{eq:simplexo}
		\VV_0^2 = \sum_{k=1}^n \VV_k^2.
	\end{equation}
\end{theorem*}

Daremos uma prova via Cálculo Vetorial, e outra usando Cálculo Integral e Geometria Analítica.

\begin{proof}[Demonstração (\cite{Eifler2008}).]
	Seja $\vec{v}$ um campo vetorial constante em $\R^n$,
	$dV$ o diferencial de $n$-volume, $d\VV$ o de $(n-1)$-volume, 
	%	$\hat{n}$ o vetor normal unitário apontando para fora da superfície de $S$, 
	e $\hat{n}_k$ o vetor normal unitário para fora de $F_k$. 
	Como $\Div \vec{v} = 0$, o Teorema da Divergência dá
	%	$0 = \int_S \Div \vec{v} \,dV 
	%	= \int_{F_0\cup\cdots\cup F_n} \vec{v}\cdot\hat{n} \,d\VV 
	%	= \sum_{k=0}^n \int_{F_k} \vec{v}\cdot\hat{n}_k \,d\VV 
	%	= \sum_{k=0}^n \vec{v}\cdot\hat{n}_k \int_{F_k}  \,d\VV 
	%	= \sum_{k=0}^n \vec{v}\cdot\hat{n}_k \VV_k 
	%	= \vec{v}\cdot \left(\sum_{k=0}^n \VV_k \hat{n}_k\right)$.
	\begin{align*}
		0 &= \int_S \Div \vec{v} \,dV 
		%		= \int_{F_0\cup\cdots\cup F_n} \vec{v}\cdot\hat{n} \,d\VV \\
		= \sum_{k=0}^n \int_{F_k} \vec{v}\cdot\hat{n}_k \,d\VV 
		= \sum_{k=0}^n \vec{v}\cdot\hat{n}_k \int_{F_k}  \,d\VV \\
		&= \sum_{k=0}^n \vec{v}\cdot\hat{n}_k \VV_k 
		= \vec{v}\cdot \left(\sum_{k=0}^n \VV_k \hat{n}_k\right)  
	\end{align*}
	Como $\vec{v}$ é arbitrário, isso implica $\sum_{k=0}^n \VV_k \hat{n}_k = 0$, logo 
	$\VV_0 \hat{n}_0 = - \VV_1 \hat{n}_1 - \cdots - \VV_n \hat{n}_n$. Como $\hat{n}_1, \ldots, \hat{n}_n$ são ortogonais,
	\eqref{eq:norma} nos dá o resultado.
\end{proof}

\begin{proof}[Demonstração (\cite{Donchian1935}).]
	Para $0\leq k\leq n$, seja $h_k$ a altura do vértice $A_k$ em relação à face oposta $F_k$.
	A seção transversal a essa altura, a uma distância $x$ de $A_k$, é semelhante a $F_k$, e tem $(n-1)$-volume $\VV(x) = \VV_k \cdot \left(\frac{x}{h_k}\right)^{n-1}$. 
	Assim, o $n$-volume de $S$ é $V = \int_0^{h_k} \VV(x) dx =\frac{\VV_k h_k}{n}$, para todo $k$.
		\SELF{Ou $V = \frac{a_1\cdots a_n}{(n+1)!}$, que vem de decompor o paralelotopo gerado a partir das $n$ arestas $A_0 A_k$ em um total de $(n+1)!$ simplexos de volume $V$}
	Para $k\neq 0$ temos $h_k = a_k$ (o comprimento da aresta $A_0A_k$),
	logo $\VV_k a_k = n V = \VV_0 h_0$, ou seja, $\VV_k = \VV_0 \cdot \frac{h_0}{a_k}$.
	
	O hiperplano da face $F_0$ é descrito pela equação $\frac{x_1}{a_1} + \cdots + \frac{x_n}{a_n} = 1$.
	Como $\vec{v} = (\frac{1}{a_1},\ldots,\frac{1}{a_n})$ é ortogonal a ele,
	e $\lambda\vec{v}$ satisfaz a equação quando $\lambda=\left(\frac{1}{a_1^2} + \cdots + \frac{1}{a_n^2}\right)^{-1}$, temos 
	$h_0 = \|\lambda\vec{v}\| = \left(\frac{1}{a_1^2} + \cdots + \frac{1}{a_n^2}\right)^{-\frac12}$.
	Assim, obtemos
	$\sum_{k=1}^n \VV_k^2 
%	= \sum_{k=1}^n \VV_0^2 \frac{h_0^2}{a_k^2}
	= \VV_0^2 h_0^2 \cdot \sum_{k=1}^n  \frac{1}{a_k^2}
	= \VV_0^2$.
\end{proof}
	
%	 $\left(\frac{\VV h}{n}\right)^2 = \left(\frac{a_1\cdots a_n}{(n+1)!}\right)^2$
%	nos dá
%	\begin{equation*}
%		\VV^2 = \frac{a_1^2\cdots a_n^2}{(n!)^2} \cdot \left(\frac{1}{a_1^2} + \cdots + \frac{1}{a_n^2}\right) = \sum_{k=1}^n \left(\frac{\prod_{j\neq k} a_j}{n!}\right)^2.
%	\end{equation*}
%%	$\VV^2 = \frac{a_1^2\cdots a_n^2}{(n!)^2} \cdot \left(\frac{1}{a_1^2} + \cdots + \frac{1}{a_n^2}\right) = \sum_{k=1}^n \left(\frac{\prod_{j\neq k} a_j}{n!}\right)^2 = \sum_{k=1}^n \VV_k^2$, 
%	Mas $F_k$ é um $(n-1)$-simplexo retangular gerado pelas arestas $a_1,\ldots,a_n$, exceto $a_k$, logo $\VV_k = \frac{\prod_{j\neq k} a_j}{n!}$.

Outros métodos serão vistos a seguir, quando olharemos para o TP e estes teoremas de outra forma.

\section{Projeções de volumes}

Esses resultados podem ser reinterpretados em termos de projeções ortogonais de $m$-volumes.
Sempre que falarmos em projeções, serão ortogonais.

Separando os lados do triângulo retângulo, podemos ver o TP como uma relação entre o comprimento de um segmento reto e suas projeções em um sistema de eixos perpendiculares, como na Fig.\,\ref{fig:proj linha eixos}. 

\begin{figure}[htb!]
	\centering
	\includegraphics[width=8cm]{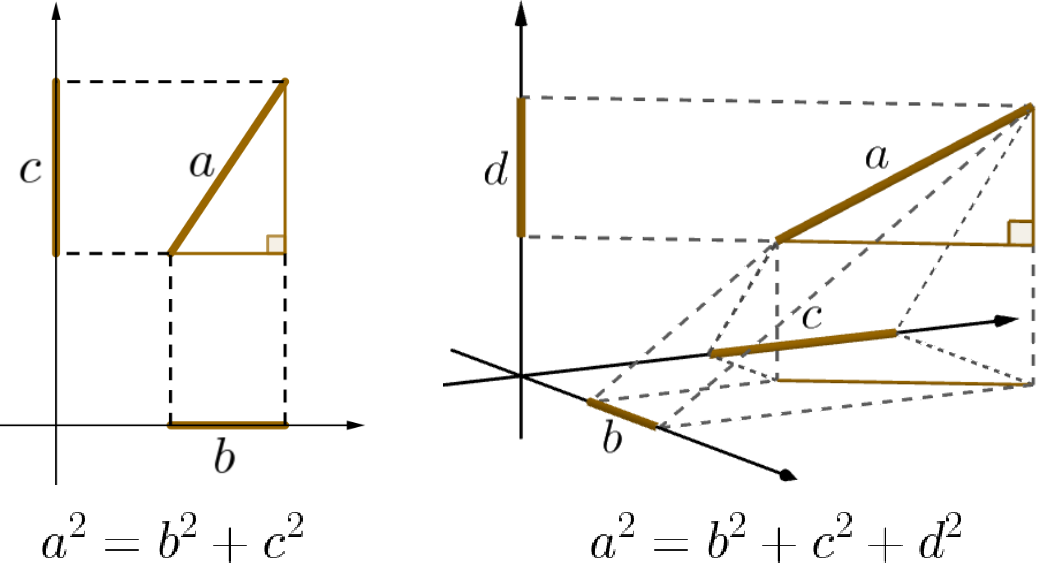}
	\caption{Projeções ortogonais de segmentos sobre eixos}
	\label{fig:proj linha eixos}
\end{figure}

E, separando as faces do tetraedro, \eqref{eq:DeGua} vira uma relação entre a área de um triângulo e suas projeções em planos coordenados perpendiculares.
Como projeções são transformações lineares,
a razão $\frac{\AAA_k}{\AAA}$ não muda se trocarmos $\AAA$ pela área de qualquer figura no mesmo plano, e $\AAA_k$ pela de sua projeção no $k$-ésimo plano coordenado.
Assim, \eqref{eq:DeGua} vale para qualquer área plana e suas projeções (Fig.\,\ref{fig:projecao planos}).
Esse resultado já era conhecido no Séc. XVIII.
	\CITE{p.450 Eves1990}

\begin{figure}[htb!]
	\centering
	\includegraphics[width=5cm]{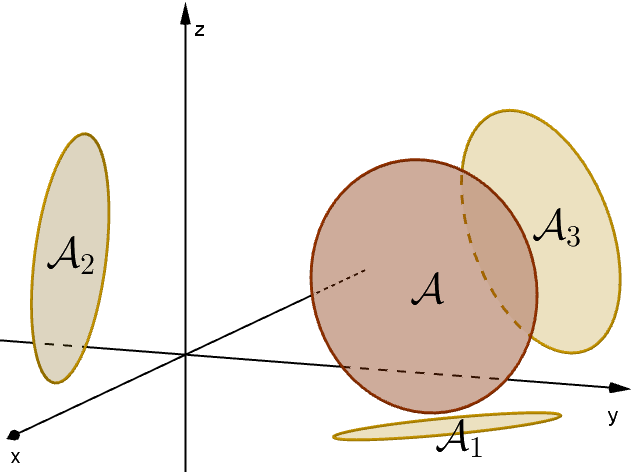}
	\caption{Projeções de uma área plana, $\AAA^2 = \AAA_1^2 + \AAA_2^2 + \AAA_3^2$}
	\label{fig:projecao planos}
\end{figure}

Da mesma forma, \eqref{eq:simplexo} vale para qualquer $(n-1)$-volume $\VV_0$ num hiperplano de $\R^n$, e suas projeções $\VV_k$ em hiperplanos coordenados perpendiculares.

Vamos estender para $m$-volumes.
Sejam $(e_1,\ldots,e_n)$ uma base ortonormal, 
e $\II_m$ o conjunto dos \emph{multi-índices} $I=(i_1,\ldots,i_m)$ com $1\leq i_1<i_2<\cdots<i_m \leq n$.
% subconjuntos de $m$ elementos de $\{1,\ldots,n\}$.
Cada $C_I = \Span\{e_i:i\in I\}$, com $I \in \II_m$, é um \emph{subespaço coordenado}.
Chamaremos de \emph{figura} ou \emph{região} os conjuntos mensuráveis, i.e.\ com um volume bem definido da dimensão do subespaço considerado.

\begin{theorem*}[\cite{Conant1974}]
	Se $\VV$ é o $m$-volume de uma região $R$ num subespaço $m$-dimensional de $\R^n$,
	e $\VV_I$ é o de sua projeção em $C_I$, então
	\begin{equation}\label{eq:Conant Beyer}
		\VV^2 = \sum_{I \in \II_m} \VV_I^2.
	\end{equation}
\end{theorem*}
%Quem não conhecer teoria da medida pode pensar em $|\cdot|$ como uma medida de volume $m$-dimensional.
%Com medidas de Lebesgue, o resultado vale mesmo que na Fig.\,\ref{fig:projecao planos} troquemos o disco por um fractal!

Pela linearidade das projeções, basta provar para um paralelotopo (paralelogramo $m$-dimensional) $R$ gerado por vetores $v_1,\ldots,v_m$.
Vamos ver primeiro uma prova via determinantes.

\begin{proof}[Demonstração (\cite{Conant1974}).]
	Pondo $v_1,\ldots,v_m$ como colunas de uma matriz $M_{n \times m}$, o teorema de Cauchy-Binet 
		\CITE{Gantmacher2000 p.9} dá
	\begin{equation}\label{eq:Cauchy-Binet}
		\det(M^T M) = \sum_{I \in \II_m} \det(M_I)^2,
	\end{equation}
	onde $M_I$ é a submatriz $m \times m$ de $M$ formada pelas linhas com índices em $I$, de modo que suas colunas são as projeções dos $v_k$'s em $C_I$.
	O resultado vem da interpretação geométrica dos determinantes:
	$\det(M^T M)$ é o determinante de Gram que dá o quadrado do $m$-volume de $R$, e $|\det(M_I)|$ dá o $m$-volume da projeção de $R$ em $C_I$.
\end{proof}

A prova mais fácil usa Álgebra Exterior \cite{Khosravi2008}, uma extensão da Álgebra Linear com \emph{multivetores}, vetores representando figuras multi-dimensionais (Fig.\,\ref{fig:multivectors}).

\begin{figure}[htb!]
	\centering
	\includegraphics[width=7cm]{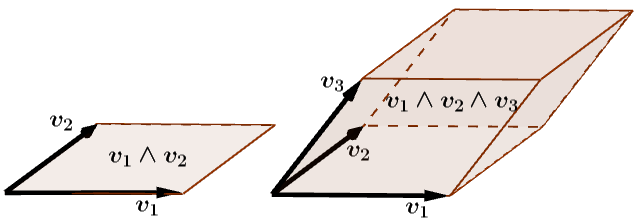}
	\caption{Multivetores: $v_1\wedge v_2$ representa um paralelogramo de área $\|v_1\wedge v_2\|$, e $v_1\wedge v_2 \wedge v_3$ representa um paralelepípedo de volume $\|v_1\wedge v_2 \wedge v_3\|$.}
	\label{fig:multivectors}
\end{figure}

\begin{proof}
	O multivetor $M = v_1\wedge \cdots \wedge v_m$ representa $R$, e a norma $\|M\|$ é seu $m$-volume $\VV$.
	Multivetores são também vetores num certo espaço vetorial, 
%	$\bigwedge^m \R^n \cong \R^N$ com $N=\binom{n}{m}$, 
	que tem base ortonormal formada pelos $e_{i_1} \wedge \cdots \wedge e_{i_m}$ com $I=(i_1,\ldots,i_m) \in \II_m$.
	Cada componente de $M$ nessa base representa a projeção de $R$ em $C_I$, de modo que \eqref{eq:Conant Beyer} vem de \eqref{eq:norma}.
\end{proof}

\begin{example}
	Se $\AAA$ é a área de uma figura plana em $\R^4=\{(x_1,\ldots,x_4)\}$, e  $\AAA_{ij}$ é sua projeção no plano $x_i x_j$, 
	\begin{equation}\label{eq:areas R4}
		\AAA^2 = \AAA^2_{12} + \AAA^2_{13} + \AAA^2_{14} + \AAA^2_{23} + \AAA^2_{24} + \AAA^2_{34}.
	\end{equation}
	Por exemplo, $v=(a,b,c,d)$ e $w=(-b,a,-d,c)$ têm mesma norma e são perpendiculares, formando um quadrado de área $\AAA = a^2+b^2+c^2+d^2$. 
	Suas projeções $(a,b,0,0)$ e $(-b,a,0,0)$ no plano $x_1 x_2$ formam um quadrado de área $\AAA_{12} = a^2+b^2$. 
	Da mesma forma, $\AAA_{34} = c^2 + d^2$.
	As projeções $(a,0,c,0)$ e $(-b,0,-d,0)$ no plano $x_1 x_3$ formam um paralelogramo com $\AAA_{13} = |bc-ad|$ (ignore a 4\textordfeminine\ coordenada nula, e use o produto vetorial).
	Analogamente, obtemos $\AAA_{24} = \AAA_{13}$ e $\AAA_{14} = \AAA_{23} = |ac+bd|$.
	Uma conta chata mostra que essas áreas satisfazem \eqref{eq:areas R4}. 
\end{example}

\begin{example}
	Se $\VV$ é o volume de uma região em um subespaço tridimensional de $\R^4$, e  $\VV_{ijk}$ é sua projeção no subespaço $x_i x_j x_k$, 
	\begin{equation*}
		\VV^2 = \VV^2_{123} + \VV^2_{124} + \VV^2_{134} + \VV^2_{234}.
	\end{equation*}
\end{example}

Com um pouco de combinatória, generalizamos para projeções em subespaços de outra dimensão.

\begin{corollary*}[\cite{Czyzewska1991,Drucker2015}]
	Se $\VV$ é o $p$-volume de uma região $R$ em um subespaço de dimensão $p\leq m$ de $\R^n$,
	e $\VV_I$ é sua projeção em $C_I$, com $I \in \II_m$, então 
	\begin{equation*}
		\VV^2 = \textstyle{\binom{n-p}{n-m}^{-1}} \cdot \displaystyle{\sum_{I \in \II_m} \VV_I^2}.
	\end{equation*}
\end{corollary*}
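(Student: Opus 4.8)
The plan is to bootstrap from the theorem \eqref{eq:Conant Beyer} by applying it twice at different dimensions and then carrying out a double count. The structural fact I will rely on is that orthogonal projections onto coordinate subspaces compose: whenever $J \subseteq I$, projecting $R$ first onto $C_I$ and then onto $C_J$ produces the same $p$-dimensional region as projecting $R$ directly onto $C_J$, since $C_J \subseteq C_I$. This is what lets me express the $p$-volumes of the large ($m$-dimensional) projections in terms of the $p$-volumes of the small ($p$-dimensional) ones.

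First I would apply \eqref{eq:Conant Beyer} to the $p$-dimensional region $R$, projecting onto the $p$-dimensional coordinate subspaces $C_J$ with $J \in \II_p$, to obtain
\[ \VV^2 = \sum_{J \in \II_p} \VV_J^2, \]
where $\VV_J$ denotes the $p$-volume of the projection of $R$ onto $C_J$. Next, for each fixed $I \in \II_m$, the projection of $R$ onto $C_I$ is a $p$-dimensional region sitting inside the $m$-dimensional space $C_I$; applying \eqref{eq:Conant Beyer} again, now inside $C_I$ (whose $p$-dimensional coordinate subspaces are exactly the $C_J$ with $J \subseteq I$ and $|J| = p$), yields
\[ \VV_I^2 = \sum_{\substack{J \subseteq I \\ |J| = p}} \VV_J^2, \]
the $\VV_J$ being the very same quantities as before, by the composition remark.

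Summing this over all $I \in \II_m$ and interchanging the order of summation, I would count, for each fixed $J \in \II_p$, how many $I \in \II_m$ contain it: the remaining $m-p$ indices of $I$ must be chosen from the $n-p$ indices outside $J$, giving $\binom{n-p}{m-p}$ such $I$. Hence
\[ \sum_{I \in \II_m} \VV_I^2 = \binom{n-p}{m-p} \sum_{J \in \II_p} \VV_J^2 = \binom{n-p}{m-p}\, \VV^2, \]
and the claimed identity follows once I rewrite $\binom{n-p}{m-p} = \binom{n-p}{n-m}$ via the symmetry of binomial coefficients. As a sanity check, the case $m=p$ recovers \eqref{eq:Conant Beyer} exactly.

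I do not expect any analytic difficulty: there is no estimate to establish, and both invocations of the theorem are legitimate since it already holds for arbitrary measurable regions. The only care needed is combinatorial bookkeeping. The delicate points are verifying the projection-composition claim, so that the $\VV_J$ arising from the two applications of \eqref{eq:Conant Beyer} genuinely coincide, and confirming the multiplicity $\binom{n-p}{m-p}$ in the double count. Once these are in place, the corollary is an immediate consequence of the previously established theorem.
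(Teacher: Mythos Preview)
Your proposal is correct and follows essentially the same argument as the paper: apply \eqref{eq:Conant Beyer} inside each $C_I$ to decompose $\VV_I^2$ into the $\VV_J^2$ with $J\subseteq I$, count that each $J\in\II_p$ lies in $\binom{n-p}{m-p}=\binom{n-p}{n-m}$ of the $C_I$'s, and then invoke \eqref{eq:Conant Beyer} once more for $R$ itself. The paper's proof is a two-line sketch of exactly this; you have simply spelled out the composition-of-projections step and the double count explicitly.
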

\begin{proof}
	Cada $\VV_I$ pode ser projetado nos subespaços coordenados de dimensão $p$ contidos em $C_I$,
	e cada um desses está em $\binom{n-p}{n-m}$ dos $C_I$'s.
	O resultado então segue de \eqref{eq:Conant Beyer}.
\end{proof}

\begin{example}
	Sejam $\LL$ o comprimento de um segmento em $\R^3$, 
	$\LL_1$, $\LL_2$ e $\LL_3$ suas projeções nos planos coordenados,
	e $b$, $c$ e $d$ suas projeções nos eixos  (Fig.\,\ref{fig:line planes}).
	Então $\LL_1^2 = b^2+c^2$, $\LL_2^2 = b^2+d^2$, $\LL_3^2 = c^2+d^2$ e $\LL^2 = b^2+c^2+d^2 = \frac12 (\LL_1^2 + \LL_2^2 + \LL_3^2)$.
%	\[ \LL^2 = \frac{\LL_1^2 + \LL_2^2 + \LL_3^2}{2}. \]
\end{example}

\begin{figure}[htb!]
	\centering
	\includegraphics[width=6cm]{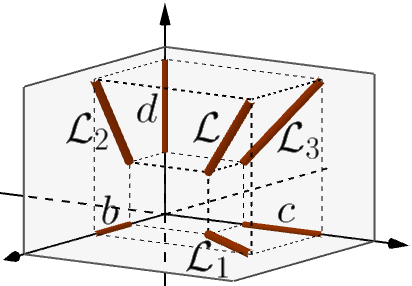}
	\caption{Projeções de um segmento nos planos coordenados,
		$\LL^2 = \frac12 (\LL_1^2 + \LL_2^2 + \LL_3^2)$.}
	\label{fig:line planes}
\end{figure}

\section{Versões complexas}

Em espaços complexos, a generalização \eqref{eq:norma} do TP pode ser reinterpretada como uma soma de áreas, e estendida para volumes de maior dimensão, com fórmulas mais simples que as do caso real.
%, pois os termos não são elevados ao quadrado.

Vamos rever a geometria real desses espaços.
Pode-se identificar $z = x + \im y \in \C$ com o vetor $(x,y) \in \R^2$, e seu módulo é a norma do vetor, ou seja, $|z|^2 = x^2 + y^2$.
O que diferencia $\C$ de $\R^2$ é a multiplicação por $\im$, que pode ser vista apenas como uma rotação de $90^\circ$, pois $\im z = -y + \im x$ corresponde ao vetor $(-y,x)$, de mesma norma e ortogonal a $(x,y)$. 
Assim, $\im^2=-1$ significa apenas que $\im^2(x,y) = \im(-y,x)=(-x,-y)$.  

Da mesma forma, identifica-se $v = (z_1,\ldots,z_n) \in \C^n$, onde $z_k = x_k + \im y_k$, com $(x_1,y_1,\ldots,x_n,y_n) \in \R^{2n}$, e 
$\|v\|^2=|z_1|^2+\cdots+|z_n|^2 = x_1^2+y_1^2+\cdots+x_n^2+y_n^2$.
% que suas normas sejam iguais, chegamos a \eqref{eq:norma}.
De novo, $\im v = (\im z_1,\ldots,\im z_n)$ corresponde a $(-y_1,x_1,\ldots,-y_n,x_n)$, que é $v$ girado $90^\circ$ numa certa direção.
Assim, $\C^n$ é só $\R^{2n}$ com um operador $\im$ de rotação de $90^\circ$. 

Se $v \in \C^n$ e $z = x+ \im y \in \C$ então $zv = x v + y(\im v)$ é combinação linear (com coeficientes reais $x$ e $y$) de $v$ e $\im v$.
Assim, a \emph{linha}%
\footnote{Por  ter dimensão complexa 1.}
\emph{complexa} $\C v = \{zv:z\in \C\}$ é o plano real $\Span\{v,\im v\}$,
no qual esses vetores formam um quadrado de área $\AAA = \|v\|^2$ (Fig.\,\ref{fig:complex square}).

\begin{figure}[htb!]
	\centering
	\includegraphics[width=6cm]{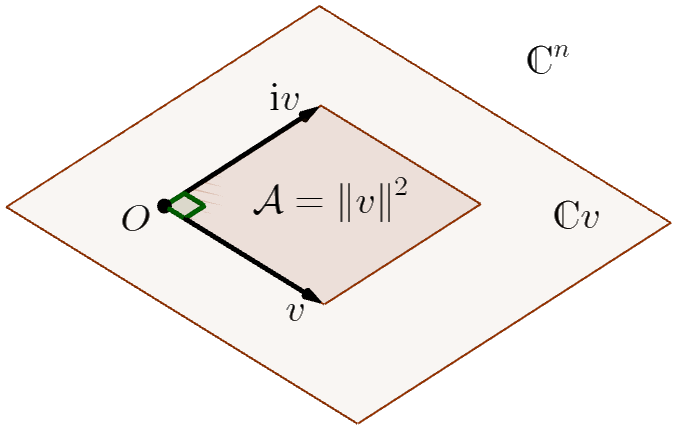}
	\caption{Quadrado numa linha complexa}
	\label{fig:complex square}
\end{figure}

Seja $(e_1,\ldots,e_n)$ uma base ortogonal de $\C^n$.

\begin{theorem*}[Pitágoras para Linhas Complexas \cite{Mandolesi_Pythagorean}]
	Se $\AAA$ é a área de uma região em uma linha complexa $\C v$, e $\AAA_k$ é sua projeção em $\C e_k$, então
	\begin{equation}\label{eq:soma areas complexo}
		\AAA = \AAA_1 + \cdots + \AAA_n.
	\end{equation}
\end{theorem*}
\begin{proof}
	Basta provar para o quadrado da Fig.\,\ref{fig:complex square}.
%	 de área $\AAA=\|v\|^2$ formado por $v$ e $\im v$.
	Como a projeção $P:\C v \rightarrow \C e_k$ é $\C$-linear, se $P(v) = v_k$ então $P(\im v) = \im v_k$.
	Logo, a projeção de $\AAA$ é outro quadrado, de área $\AAA_k = \|v_k\|^2$.
	O resultado segue de \eqref{eq:norma}.
\end{proof}

Ao contrário do caso real, \eqref{eq:soma areas complexo} não é quadrática, e tem menos termos, já que a dimensão complexa é metade da real correspondente.
Na Fig.\,\ref{fig:complex lines}, 
$\AAA$ é a soma de 2 áreas, das projeções nas linhas complexas de uma base ortogonal $(e_1,e_2)$ de $\C^2$. 
Identificando $\C^2 \cong \R^4$, teríamos \eqref{eq:areas R4}, com 6 termos quadráticos.

\begin{figure}[htb!]
	\centering
	\includegraphics[width=5cm]{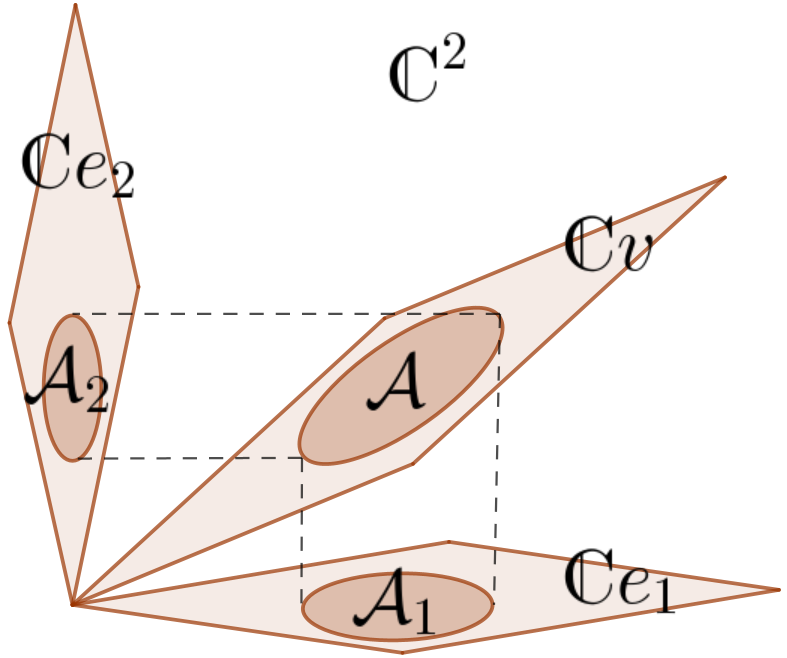}
	\caption{Projeção em linhas complexas, $\AAA = \AAA_1 + \AAA_2$}
	\label{fig:complex lines}
\end{figure}

\begin{example}
	Em $\C^2$, os vetores $v=(a+ib,c+id)$ e $\im v =(-b+\im a,-d+\im c)$, com $a,b,c,d\in\R$, têm norma $\sqrt{a^2+b^2+c^2+d^2}$,
	e formam um quadrado de área $\AAA = a^2+b^2+c^2+d^2$.
%	Uma base ortonormal é dada por 
	Sejam $e_1=(1,0)$ e $e_2=(0,1)$.
	As projeções de $v$ e $ \im v$ em $\C e_1$ são $(a+\im b,0)$ e $(-b+\im a,0)$, e formam um quadrado de área $\AAA_1 = a^2+b^2$.
	E em $\C e_2$ formam um quadrado de área $\AAA_2 = c^2 + d^2$.
	Como previsto, $\AAA = \AAA_1 + \AAA_2$.

	Identificando $\C^2 \cong \R^4$, este se torna o exemplo dado após \eqref{eq:areas R4}, com 	
	$w = iv$. 
	Note como a abordagem real foi mais complexa%
	\footnote{Juro que tentei resistir.}.
	Naquele exemplo, ao planos $x_1 x_2$ e $x_3 x_4$ correspondem a $\C e_1$ e $\C e_2$, por isso as projeções deram quadrados.
	O plano $x_1 x_3$ não corresponde a uma linha complexa, pois $\im (1,0,0,0) = (0,1,0,0)$ sai dele, e o mesmo ocorre com os outros nos quais as projeções deram paralelogramos.
\end{example}

Vamos estender para um subespaço complexo $V$ de dimensão $m$,
que é um subespaço real de dimensão $2m$ invariante pela rotação $\im$, isto é, $v \in V \Rightarrow \im v \in V$.
Como antes, seja $C_I = \Span\{e_i:i\in I\}$ para $I \in \II_m$ (mas agora o $\Span$ é com coeficientes complexos).

\begin{theorem*}[Pitágoras para Subespaços Complexos \cite{Mandolesi_Pythagorean}]
	Se $\VV$ é o $2m$-volume de uma região $R$ num subespaço complexo $m$-dimensional em $\C^n$, e $\VV_I$ é sua projeção em $C_I$, então
	\begin{equation}\label{eq:complex volumes}
		\VV = \sum_{I \in \II_m} \VV_I.
	\end{equation}
\end{theorem*}
\begin{proof}
	Como a de \eqref{eq:Conant Beyer}, mas \eqref{eq:Cauchy-Binet} se torna
%	\begin{equation*}\label{eq:Cauchy-Binet complexo}
		$\det(M^\dagger M) = \sum_{I \in \II_m} |\det(M_I)|^2$,
%	\end{equation*}
	onde $M^\dagger$ é a transposta conjugada,
	e a interpretação de determinantes complexos muda:
	$\det(M^\dagger M)$ é o $2m$-volume do paralelotopo formado por $v_1, \im v_1, \ldots,v_m, \im v_m$,
	e sua projeção em $C_I$ é $|\det(M_I)|^2$.
	
	Também pode-se usar \eqref{eq:norma} e Álgebra Exterior complexa, na qual $\|v_1\wedge \cdots \wedge v_m\|^2$ é o $2m$-volume do paralelotopo formado por $v_1, \im v_1, \ldots,v_m, \im v_m$.
\end{proof}

%Novamente a relação não é quadrática, ao contrário do caso real \eqref{eq:Conant Beyer}.

\section{Conclusão}

Há mais generalizações que não discutimos, como versões da lei dos cossenos para áreas ou volumes \cite{Cho1991,Khosravi2008}, ou em geometrias não-euclidianas \cite{Foote2017}.
Outras esperam para serem descobertas:
por ex., nos parece que \eqref{eq:Conant Beyer} e \eqref{eq:complex volumes} virem $\sqrt{\VV} = \sum_{I \in \II_m} \sqrt{\VV_I}$ em espaços quaterniônicos%
\footnote{Quatérnios generalizam os complexos, com 4 números reais.};
e há menções (sem prova) de uma versão do Teorema de De Gua para geometrias não-euclidianas de dimensão 3, mas não para dimensões maiores.
	\CITE{\url{https://math.stackexchange.com/questions/1374058/why-does-the-pythagorean-theorem-have-its-simple-form-only-in-euclidean-geometry}}
Também seria interessante provar o Teorema de Pitágoras Próprio a partir dos axiomas da Geometria Absoluta (a grosso modo, a parte comum das geometrias euclidiana, elíptica e hiperbólica).

É impressionante que, milênios após suas primeiras aparições, o TP ainda guarde surpresas e mistérios.
Isso mostra que esse velho teorema, com suas mil faces, continua a evoluir e dar frutos.
Vida longa ao Teorema de Pitágoras!

% RECOLOCAR!!!!
%\nocite{*}
%\vfill

%\bibliographystyle{amsalpha} 
%\bibliography{Bibliografia/Pythagorean}
%\bibliography{../../../Bibliografia_Linear_Geometry/Linear_Geometry}

\begin{thebibliography}{10}
	
	\bibitem{Bogomolny}
	A.~Bogomolny.
	\newblock {P}ythagorean {T}heorem.
	\newblock \url{https://www.cut-the-knot.org/pythagoras}.
	\newblock Accessed: 2023-10-04.
	
	\bibitem{Cho1991}
	E.~C. Cho.
	\newblock The generalized cross product and the volume of a simplex.
	\newblock {\em Appl. Math. Lett.}, 4(6):51--53, 1991.
	
	\bibitem{Conant1974}
	D.~R. Conant and W.~A. Beyer.
	\newblock Generalized {P}ythagorean theorem.
	\newblock {\em Amer. Math. Monthly}, 81(3):262--265, 1974.
	
	\bibitem{Czyzewska1991}
	K.~Czyzewska.
	\newblock Generalization of the {P}ythagorean theorem.
	\newblock {\em Demonstratio Math.}, 24(1–2), 1991.
	
	\bibitem{Donchian1935}
	P.~S. Donchian and H.~S.~M. Coxeter.
	\newblock An n-dimensional extension of {P}ythagoras' theorem.
	\newblock {\em Math. Gaz.}, 19(234):206--206, 1935.
	
	\bibitem{Drucker2015}
	D.~Drucker.
	\newblock A comprehensive {P}ythagorean theorem for all dimensions.
	\newblock {\em Amer. Math. Monthly}, 122(2):164--168, 2015.
	
	\bibitem{Eifler2008}
	L.~Eifler and N.~H. Rhee.
	\newblock The n-dimensional {P}ythagorean theorem via the divergence theorem.
	\newblock {\em Amer. Math. Monthly}, 115(5):456--457, 2008.
	
	\bibitem{Foote2017}
	R.~L. Foote.
	\newblock A unified {P}ythagorean theorem in {E}uclidean, spherical, and
	hyperbolic geometries.
	\newblock {\em Math. Mag.}, 90(1):59--69, feb 2017.
	
	\bibitem{Khosravi2008}
	M.~Khosravi and M.~D. Taylor.
	\newblock The wedge product and analytic geometry.
	\newblock {\em Amer. Math. Monthly}, 115(7):623--644, 2008.
	
	\bibitem{Yeng1990}
	S.~Y. Lin and Y.~F. Lin.
	\newblock The n-dimensional {P}ythagorean theorem.
	\newblock {\em Linear Multilinear Algebra}, 26(1-2):9--13, 1990.
	
	\bibitem{Loomis1968}
	E.~S. Loomis.
	\newblock {\em The {P}ythagorean Proposition}.
	\newblock National Council of Teachers of Mathematics, Inc., 1968.
	
	\bibitem{Mandolesi_Pythagorean}
	A.~L.~G. Mandolesi.
	\newblock Projection factors and generalized real and complex
	{P}y\-thag\-o\-re\-an theorems.
	\newblock {\em Adv. Appl. Clifford Algebras}, 30(43), 2020.
	
	\bibitem{Maraner2010}
	P.~Maraner.
	\newblock A spherical {P}ythagorean theorem.
	\newblock {\em Math. Intelligencer}, 32(3):46--50, 09 2010.
	
	\bibitem{Pambuccian2010}
	V.~Pambuccian.
	\newblock {M}aria {T}eresa {C}alapso’s hyperbolic {P}ythagorean theorem.
	\newblock {\em Math. Intelligencer}, 32(4):2--2, 2010.
	
	\bibitem{Roque2012}
	Tatiana Roque.
	\newblock {\em Hist{\'o}ria da matem{\'a}tica: uma vis{\~a}o cr{\'i}tica,
		desfazendo mitos e lendas}.
	\newblock Editora Zahar, 2012.
	
	\bibitem{Wolfe2012}
	H.~E. Wolfe.
	\newblock {\em Introduction to non-Euclidean geometry}.
	\newblock Courier Corporation, 2012.
	
\end{thebibliography}

\end{document}